 \newcommand{\m}{\mathfrak{m} }
\theoremstyle{plain}
 \newtheorem{thm}{Theorem}[section]
 \newtheorem{prop}[thm]{Proposition}
 \newtheorem{lem}[thm]{Lemma}
 \newtheorem{cor}[thm]{Corollary}
\theoremstyle{definition}
\theoremstyle{remark}
 \newtheorem{rem}[thm]{Remark}
 \newtheorem{question}{Question}
\newcommand{\Tor}{\operatorname{Tor}}
\newcommand{\Ker}{\operatorname{Ker}}
\newcommand{\Image}{\operatorname{Im}}
\newcommand{\Syz}{\operatorname{Syz}}
\newcommand{\Ho}{\operatorname{H}}
\newcommand{\lin}{\operatorname{lin}}
\newcommand{\ld}{\operatorname{ld}}
\newcommand{\agr}{\operatorname{\mathsf g}}
\title[ ] { Linearity defect of the residue field of short local rings}
\author{  Rasoul Ahangari Maleki }
\address{Rasoul Ahangari Maleki, School of Mathematics, Institute for Research in Fundamental Sciences (IPM), P.O. Box: 19395-5746, Tehran, Iran}
\email{rahangari@ipm.ir, rasoulahangari@gmail.com}
\keywords{ Minimal free  resolutions, Linearity defect}
\subjclass[2010]{ 13D07 (primary), 13D02 (secondary)  }
\thanks{This research was in part  supported by a grant from IPM (No. 94130028). This work was also supported by the Iran National Science Foundation (INSF) grant No. 95001343}
\numberwithin{equation}{thm}
\begin{document}
\maketitle
\begin{abstract}
Let  $(R,\m,k)$  be a Noetherian  local ring with  maximal ideal
$\m$ and  residue field $k$. The linearity  defect of a finitely
generated  $R$-module $M$, which is denoted  $\ld_R(M)$, is a
numerical measure of how far  $M$  is from having linear resolution.
We study the linearity defect of the residue field. We give a
positive answer to the question raised by Herzog and Iyengar of
whether $\ld_R(k)<\infty$ implies $\ld_R(k)=0$, in the case when
$\m^4=0$.
\end{abstract}

%\thispagestyle{empty}

%\section*{Introduction}

\vskip 1 cm

\smallskip
\section{Introduction and notation}

This paper is concerned with  the notion of the linearity defect of
the residue field  of a commutative Noetherian local ring. This
invariant was introduced by Herzog and Iyengar \cite{H-i} and has
been further studied by Iyengar and R\"{o}mer \cite {I-T}, \c{S}ega
\cite{S} and Nguyen \cite{Hop}. Let us recall the definition of the
linearity defect. Throughout this paper $(R,\m,k)$ will denote a
commutative Noetherian local ring with maximal ideal $\m$ and
residue field $k$. Let
$$ \mathbf{F}: \cdots \rightarrow F_{n}\xrightarrow{\partial_{n}}F_{n-1}\xrightarrow{\partial_{n-1}} \cdots\rightarrow F_{1}\xrightarrow{\partial_{1}} F_{0}\rightarrow 0$$
be a minimal complex (i.e. $\partial_i(F_i)\subseteq \m F_{i-1}$ for all $i\geq 0$) of finitely generated free $R$-modules.
Then  the complex
has a filtration $\{\mathfrak{F}^p \mathbf{F}\}_{p\geq
 0}$ with $(\mathfrak{F}^p \mathbf{F})_i=\m^{p-i}F_i$ for all $p$
 and $i$ where, by  convention,  $\m^j=R$ for all $j\leq 0$. The
 associated graded complex with respect to this filtration is called
  \textit{the linear part} of $ \mathbf{F}$ and denoted by
  $\lin^R(\mathbf{F})$. Let $N$ be an $R$-module. The notation $R^{\agr}$ will stand for
  the associated graded ring $\oplus_{i\geq 0}\m^i/\m^{i+1}$ and $N^{\agr}$ for the associated
  graded $R^{\agr}$-module $\oplus_{i\geq 0}\m^iN/\m^{i+1}N$. By
  construction,  $\lin^R(\mathbf{F})$ is a graded complex of graded
  free $R^{\agr}$-modules and has the property that
  $\lin^R_n(\mathbf{F})=F_n^{\agr}(-n)$, for all $n$. For more information about this
  complex, we again refer
  to \cite{H-i} and \cite {I-T}.
  Let $M$ be a finitely generated $R$-module. The \textit{linearity defect} of $M$ is
defined to be the number
$$\ld_{R}(M):=\sup\{i\in\mathbb{Z}|  \Ho_{i}(\lin^{R}(\mathbf{F})) \neq 0\},$$
where $\mathbf{F}$ is a minimal free resolution of $M$.
 By definition,  $\ld_R(M)$ can be infinite and
$\ld_R(M)\leq d$  is finite if and only if $(\Syz_d(M))^{\agr}$ has
a linear resolution over the standard graded algebra $R^{\agr}$,
where  $\Syz_d(M)$ is the $d$th syzygy module of $M$. In particular,
$\ld_R(M)=0$ if and only if $M^{\agr}$ has a linear resolution over
$R^{\agr}$ and then $\lin^{R}(\mathbf{F})$ is a minimal graded free
resolution of $M^{\agr}$.  The notion of the linearity defect can be
defined, in the same manner, for graded modules over a standard
graded algebra $A$ over a field $k$. In \cite{H-i}, the authors
proved that if $\ld_A(k)<\infty$, then $\ld_A(k)=0$.
%This is remindful of a result
%of Avramov and Peeva \cite{AP} which says that the regularity
%$\reg_A(k)<\infty$ if and only if $\reg_A(k)=0$ (i.e. $A$ is a
%Koszul algebra).
Motivated by this known result in the graded case,
the following natural question raised in \cite{H-i}.

\begin{question}
\label{loc1}  If $\ld_R(k)< \infty,$ does it follow that
$\ld_R(k)=0$?
\end{question}
 If $R^{\agr}$ is Cohen-Macaulay, $\c{S}$ega \cite{S} showed that the question
has positive answer in the case that $R$ is a complete intersection.
Also, she gave an affirmative answer when $\m^3=0$.
 %When $R^{\agr}$ is Cohen–-Macaulay, the problem can be reduced to
%the case where $R$ is Artinian.  If the conditions of $(2)$ hold,
%then the problem reduced to the case where $\m^2=0$.
Another
positive answer to the question is given by  the author and Rossi
\cite{AR} when $R$ is of homogeneous type, that is
 $\dim_k\Tor^R_i(k,k)=\dim_k
\Tor^{R^{\agr}}_i(k,k)$ for all $i$.\\
\indent In this paper we show that this problem has an affirmative
answer when $\m^4=0$. The proof relies on  the existence of a DG
algebra structure of a minimal free resolution of residue field
$k$.
\section{Preliminaries and the main result}
\indent \c{S}ega provided an interpretation of linearity defect in
term of vanishing of special maps. For each $n\geq 0$ and $i\geq0$
we consider the map
$$\upsilon^{n}_i(M):\Tor_{i}^R(M,R/\m^{n+1})\rightarrow
\Tor_{i}^R(M,R/\m^{n})$$ induced by the natural surjection
$R/\m^{n+1}\rightarrow R/\m^n$.
 For simplicity,  we set
$\upsilon^{n}_i:=\upsilon^{n}_i(k)$ when $M=k$.

\begin{thm}\cite[Theorem 2.2]{S}\label{S}
Let $M$ be a finitely generated  $R$-module and $d$ be an integer.
Then the following conditions are equivalent.
\begin{enumerate}
\item $\ld_R(M)\leq d;$
\item $\upsilon^{n}_i(M)=0$
for all $i\geq d+1$ and all $n\geq 0$.
\end{enumerate}
\end{thm}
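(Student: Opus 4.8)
The plan is to compute $\Ho_i(\lin^R(\mathbf{F}))$ explicitly through lifts to the minimal resolution $\mathbf{F}$ and to match the result against the maps $\upsilon^n_i=\upsilon^n_i(M)$. Write $C^{(n)}=\mathbf{F}\otimes_R R/\m^n$, so that $\Ho_i(C^{(n)})=\Tor^R_i(M,R/\m^n)$ and $\upsilon^n_i$ is induced by the surjection $C^{(n+1)}\twoheadrightarrow C^{(n)}$. First I would record the concrete shape of the linear part: in homological degree $i$ and internal degree $i+n$ its term is $\m^nF_i/\m^{n+1}F_i$, a class there is a cycle precisely when it lifts to some $x\in\m^nF_i$ with $\partial_i(x)\in\m^{n+2}F_{i-1}$, and it is a boundary precisely when $x\equiv\partial_{i+1}(y)\pmod{\m^{n+1}F_i}$ for some $y\in\m^{n-1}F_{i+1}$ (with the convention $\m^j=R$ for $j\le0$). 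Thus $\ld_R(M)\le d$ says exactly that, in every homological degree $i>d$, such cycles are boundaries.

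The conceptual bridge is the short exact sequence of complexes
$$0\longrightarrow \m^n\mathbf{F}/\m^{n+1}\mathbf{F}\longrightarrow \mathbf{F}/\m^{n+1}\mathbf{F}\longrightarrow \mathbf{F}/\m^n\mathbf{F}\longrightarrow 0.$$
By minimality the differential induced on the left-hand subcomplex is zero, so its $i$-th homology is the linear-part term $\m^nF_i/\m^{n+1}F_i$, and the associated long exact sequence takes the form
$$\cdots\to \m^nF_i/\m^{n+1}F_i\xrightarrow{\ \iota^n_i\ }\Tor^R_i(M,R/\m^{n+1})\xrightarrow{\ \upsilon^n_i\ }\Tor^R_i(M,R/\m^n)\xrightarrow{\ \delta^n_i\ }\m^nF_{i-1}/\m^{n+1}F_{i-1}\to\cdots.$$
A check on representatives then shows that the differential $\partial^{\lin}$ of $\lin^R(\mathbf{F})$ factors as $\partial^{\lin}=\delta^{n+1}_i\circ\iota^n_i$, using the maps of two consecutive such sequences. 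This identity is the dictionary that turns every statement about $\Ho_i(\lin^R(\mathbf{F}))$ into one about the $\upsilon^n_i$.

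For $(1)\Rightarrow(2)$ I would argue by peeling. Fix $i>d$ and a class $z\in\Tor^R_i(M,R/\m^{n+1})$, represented by $x\in F_i$ with $\partial_i(x)\in\m^{n+1}F_{i-1}$; the goal is $\upsilon^n_i(z)=0$. Setting $x_0=x$, for each $t=0,1,\dots,n-1$ the reduction of $x_t$ modulo $\m^{t+1}F_i$ is a cycle of $\lin^R(\mathbf{F})$ in internal degree $i+t$ --- here $\partial_i(x_t)=\partial_i(x)\in\m^{n+1}F_{i-1}\subseteq\m^{t+2}F_{i-1}$ --- hence a boundary because $\Ho_i(\lin^R(\mathbf{F}))=0$; subtracting the corresponding $\partial_{i+1}(y_t)$ with $y_t\in\m^{t-1}F_{i+1}$ replaces $x_t$ by some $x_{t+1}\in\m^{t+1}F_i$ without changing $\partial_i(x_t)$. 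After $n$ steps $x$ differs from a boundary $\partial_{i+1}(y)$ by an element of $\m^nF_i$, so its image in $\Tor^R_i(M,R/\m^n)$ vanishes; thus $\upsilon^n_i=0$ for all $i>d$ and all $n$ (the case $n=0$ being trivial).

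For $(2)\Rightarrow(1)$ I would run the dictionary in reverse. If $\upsilon^n_i=0$ for all $i>d$ and all $n$, then in those degrees exactness of the displayed sequence makes $\iota^n_i$ surjective and $\delta^n_i$ injective; substituting this into $\partial^{\lin}=\delta^{n+1}_i\circ\iota^n_i$ gives $\Ker\partial^{\lin}=\Image\partial^{\lin}$ in each internal degree, that is $\Ho_i(\lin^R(\mathbf{F}))=0$ for $i>d$, whence $\ld_R(M)\le d$. The main obstacle is setting up the dictionary correctly: verifying the factorization $\partial^{\lin}=\delta^{n+1}_i\circ\iota^n_i$ on representatives and keeping the degree bookkeeping straight --- the internal-degree shift, the convention $\m^j=R$ for $j\le0$, and the boundary cases $n=0$ and small $i$ --- is where the real care lies, after which both implications reduce to the short arguments above.
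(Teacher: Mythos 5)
Your proof is correct: the factorization $\partial^{\lin}=\delta^{n+1}_i\circ\iota^n_i$ of the linear-part differential through the connecting maps of the long exact sequences coming from $0\to \m^n\mathbf{F}/\m^{n+1}\mathbf{F}\to \mathbf{F}/\m^{n+1}\mathbf{F}\to \mathbf{F}/\m^n\mathbf{F}\to 0$ checks out on representatives, and both the peeling argument for $(1)\Rightarrow(2)$ and the reverse direction (where, to be precise about indices, one uses injectivity of $\delta^{n+1}_i$ from $\upsilon^{n+1}_i=0$ and surjectivity of $\iota^{n-1}_{i+1}$ from $\upsilon^{n-1}_{i+1}=0$, together with the unconditional exactness identity $\Ker\iota^n_i=\Image\delta^n_{i+1}$) go through. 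The paper itself gives no proof, importing the statement from \c{S}ega's paper, and your argument is essentially a faithful reconstruction of the original proof there, so there is nothing methodologically divergent to compare.
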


\begin{rem}\label{first}
Let  $i\geq 0$. Assume that $\mathbf{F}$ is a minimal free resolution
of a finitely generated $R$-module $M$. Then by \cite[2.3
$(2^{'})$]{S}, the following statements are equivalent.
\begin{enumerate}

\item  $\upsilon^{1}_i(M)=0;$
\item if
$x\in F_{i}$ satisfies $\partial_{i}(x)\in \m^2F_{i-1}$, then $x\in
\m F_{i}$.
\end{enumerate}
\end{rem}

\indent  Let $S$ be a unitary commutative ring. Given an $S$-complex
$\mathbf{C}$, we write $|c|=i$ (the homological degree of $c$) when
$c\in\mathrm{C}_i$. When we write $c\in \mathbf{C}$ we mean $c\in C_i$ for some $i$. A (graded commutative) \textit{DG algebra} over $S$ is a
non-negative $S$-complex $(\mathbf{D},\partial)$ with  a morphism of
complexes called the product
\begin{eqnarray*}
\mu^{\mathbf{D}}: &\mathbf{D}\otimes_S\mathbf{D}\rightarrow  \mathbf{D}& \\
&a \otimes b\mapsto  ab&
\end{eqnarray*}
satisfying the following properties:
\begin{enumerate}
\item[$(i)$] unital: there is an element $1\in\mathrm{D}_0$ such that $1a=a1=a$ for
$a\in\mathbf{D} $;
\item[$(ii)$] associative: $a(ba)=(ab)c$ for all $a,b,c\in \mathbf{D}$;
\item[$(iii)$] graded commutative: $ab=(-1)^{|a||b|}ba\in D_{|a|+|b|}$ for
all $a,b\in \mathbf{D}$ and $a^2=0$ when $|a|$ is odd.
\end{enumerate}
The fact that $\mu$ is a morphism of complexes is expressed by the \textit{Leibniz rule}:
\[\partial(ab)=\partial(a)b+(-1)^{|a|}a\partial(b)\]
For more information on DG algebras  we refer to \cite{Av}.
\begin{rem}\label{homology}
If $(\mathbf{D},\partial)$ is a DG algebra over $S$. Using Leibniz
rule, one can see that the subcomplex of cycles
$Z(\mathbf{D})$ is a DG subalgebra of $\mathbf{D}$ and the
boundaries $B(\mathbf{D})$ is a DG ideal of $Z(\mathbf{D})$. Thus
the product on $\mathbf{D}$ induces a product on the homology
$H(\mathbf{D})=Z(\mathbf{D})/B(\mathbf{D})$. In particular,
$\oplus_{i\geq 0} H_n(\mathbf{D})$ is a graded module over
commutative ring $H_0(\mathbf{D})$.
\end{rem}
Tate constructed a DG algebra (free) resolution of $k$. Furthermore,
 such a  resolution can be chosen to be  minimal, see \cite[Theorem
 6.3.5]{Av}, which we refer a minimal Tate resolution of
$k$ over $R$.\\
\indent The following lemma shows that the linear part of a minimal
Tate resolution of $k$ inherits a DG algebra structure from that of
the resolution.

\begin{lem}\label{dg}
 Let $(\mathbf{F},\partial)$ be a minimal Tate resolution of $k$.
Then $\lin^R(\mathbf{F})$ has a DG algebra structure induced by that
of $\mathbf{F}$.
\end{lem}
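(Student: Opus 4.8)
The plan is to show that the product on $\mathbf{F}$ descends to a well-defined product on the associated graded complex $\lin^R(\mathbf{F})$, and that this induced product satisfies the DG algebra axioms. Recall that $\lin^R(\mathbf{F})$ is the associated graded of $\mathbf{F}$ with respect to the filtration $(\mathfrak{F}^p\mathbf{F})_i=\m^{p-i}F_i$, so that $\lin^R_n(\mathbf{F})=F_n^{\agr}(-n)$. A homogeneous element of $\lin^R(\mathbf{F})$ of internal degree $p$ in homological degree $i$ is represented by the class in $\m^{p-i}F_i/\m^{p-i+1}F_i$ of some $x\in\m^{p-i}F_i$. The natural candidate for the product is to set $[x]\cdot[y]:=[xy]$, where $xy$ is computed using the DG product on $\mathbf{F}$. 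The whole argument hinges on checking that this is well-defined and compatible with the grading and differential.

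First I would verify that the product respects the filtration, i.e.\ that multiplication sends $(\mathfrak{F}^p\mathbf{F})_i\otimes(\mathfrak{F}^q\mathbf{F})_j$ into $(\mathfrak{F}^{p+q}\mathbf{F})_{i+j}$. Concretely, if $x\in\m^{p-i}F_i$ and $y\in\m^{q-j}F_j$, then since $\mathbf{F}$ is a complex of $R$-modules and the product is $R$-bilinear, $xy\in\m^{(p-i)+(q-j)}F_{i+j}=\m^{(p+q)-(i+j)}F_{i+j}$, which is exactly the correct filtration piece. This is the key structural compatibility and it follows almost immediately from $R$-bilinearity of $\mu^{\mathbf{F}}$ together with the multiplicativity of the powers of $\m$. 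Well-definedness of $[x]\cdot[y]:=[xy]$ on the quotients then follows by the same estimate applied to the error terms: replacing $x$ by $x+x'$ with $x'\in\m^{p-i+1}F_i$ changes $xy$ by $x'y\in\m^{(p+q)-(i+j)+1}F_{i+j}$, which is zero in the associated graded, and symmetrically in $y$.

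Next I would transport the three DG algebra axioms. Unitality, associativity, and graded commutativity are all identities that hold for representatives in $\mathbf{F}$ and therefore pass to classes; the unit is the class of $1\in F_0$, which sits in internal degree $0$. The only point needing a little care is the \emph{Leibniz rule} for the induced differential $\partial^{\lin}$ on $\lin^R(\mathbf{F})$. Here I would recall that the linear part differential is the associated graded of $\partial$: on the class of $x\in\m^{p-i}F_i$ it is induced by the component of $\partial(x)$ lying in $\m^{p-i}F_{i-1}$ modulo $\m^{p-i+1}F_{i-1}$ (the ``strictly linear'' part of the differential), the higher-order terms being killed in the associated graded. Applying the Leibniz rule $\partial(xy)=\partial(x)y+(-1)^{|x|}x\partial(y)$ in $\mathbf{F}$ and then reading off the appropriate graded pieces should yield the Leibniz rule for $\partial^{\lin}$, provided the higher-order error terms distribute correctly.

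The main obstacle I anticipate is precisely this last compatibility between the induced product and the linear-part differential: one must confirm that taking ``the linear part of a product'' agrees with ``the product of linear parts'' at the level of the differential, i.e.\ that no cross terms of intermediate filtration order survive or get mismatched when passing to $\gr$. The clean way to handle this is to observe that both the product and the differential are filtered maps of the filtered complex $\mathbf{F}$, so each induces a map on the associated graded functorially, and the Leibniz identity—being an equality of filtered maps $\mathbf{F}\otimes\mathbf{F}\to\mathbf{F}$—induces the corresponding identity on $\gr$. Phrasing the argument functorially via $\gr$ of filtered morphisms, rather than chasing representatives term by term, is what makes the verification routine and avoids getting lost in the bookkeeping of filtration degrees.
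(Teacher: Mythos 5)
Your proof is correct, and at bottom it is the same argument as the paper's: the product on $\lin^R(\mathbf{F})$ is the one induced by $\mu^{\mathbf{F}}$ on the associated graded complex, and the DG axioms descend. The difference is in the packaging. The paper argues functorially: since $\mathbf{F}$ is minimal, so is $\mathbf{F}\otimes_R\mathbf{F}$, hence $\mu^{\mathbf{F}}$ induces a morphism $\lin^R(\mu^{\mathbf{F}}):\lin^R(\mathbf{F}\otimes_R\mathbf{F})\rightarrow\lin^R(\mathbf{F})$, which is composed with the natural isomorphism $\lambda:\lin^R(\mathbf{F})\otimes_{R^{\agr}}\lin^R(\mathbf{F})\rightarrow\lin^R(\mathbf{F}\otimes_R\mathbf{F})$ of \cite[Lemma 2.7]{I-T}; the Leibniz rule is then automatic, because $\lin^R(\mu^{\mathbf{F}})\circ\lambda$ is by construction a morphism of complexes. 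Your element-level verification replaces the citation of that lemma with a by-hand check of filtration compatibility ($\m^{a}F_i\cdot\m^{b}F_j\subseteq\m^{a+b}F_{i+j}$, by $R$-bilinearity of $\mu^{\mathbf{F}}$) and well-definedness of $[x]\cdot[y]=[xy]$ --- which is exactly the content you would otherwise import from $\lambda$, and since a well-defined $R^{\agr}$-bilinear pairing suffices to define the product, you do not even need $\lambda$ to be an isomorphism. The cost is that Leibniz must be checked on representatives; this works because minimality puts $\partial(x)$ in $\m^{p-i+1}F_{i-1}$, so both terms $\partial(x)y$ and $x\partial(y)$ land in the same filtration level and the identity in $\mathbf{F}$ passes to classes --- and your closing suggestion to phrase everything as $\gr$ of filtered morphisms is precisely the paper's functorial formulation. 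One small slip worth fixing: the linear-part differential sends the class of $x\in\m^{p-i}F_i$ to the class of $\partial(x)$ in $\m^{p-i+1}F_{i-1}/\m^{p-i+2}F_{i-1}$ (minimality guarantees $\partial(x)\in\m^{p-i+1}F_{i-1}$, so there is no ``component'' to extract), not to a class in $\m^{p-i}F_{i-1}/\m^{p-i+1}F_{i-1}$ as you wrote; the indices in your description are off by one, though this does not affect the substance of the argument.
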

\begin{proof}
Let  $\mu^{\mathbf{F}}:\mathbf{F}\otimes_R\mathbf{F}\rightarrow
\mathbf{F}$ be a morphism of complexes which defines the product on
$\mathbf{F}$. Set $S=R^{\agr}$. Since  $\mathbf{F}$ is minimal we
see that  $\mathbf{F}\otimes_R\mathbf{F}$ is a minimal complex as
well. Hence  the morphism induces a morphism of graded $S$-complexes
$\lin^R(\mu^{\mathbf{F}}):
\lin^R(\mathbf{F}\otimes_R\mathbf{F})\rightarrow \lin^R(\mathbf{F})$
such that  if  $i,n\geq 0$  and $x^*$ is the image of an element $x\in
\m^i(\mathbf{F}\otimes_R\mathbf{F})_n $ in $\m^i(\mathbf{F}\otimes_R\mathbf{F})_n/\m^{i+1}(\mathbf{F}\otimes_R\mathbf{F})_n  $, then $\lin^R(\mu^{\mathbf{F}})$ maps $x^*$ into
the image of $\mu(x)$ in $\m^iF_n/\m^{i+1}F_{n}$. There is also a natural
isomorphism of graded $S$-complexes
$\lambda:\lin^R(\mathbf{F})\otimes_S \lin^R(\mathbf{F})\rightarrow
\lin^R(\mathbf{F}\otimes_R\mathbf{F})$ such that if $i,j,n,m\geq 0$ and $x\in \m^{i}F_n$ and
$y\in \m^{j}F_m$ with images $x^*$ in $\m^{i}F_n/\m^{i+1}F_n $ and $y^*$ in $ \m^{j}F_m/ \m^{j+1}F_m$ respectively, then $\lambda$
 maps $x^*\otimes y^*$ into the image of $x\otimes y$ in
$\m^{i+j}(\mathbf{F}\otimes_R\mathbf{F})_{n+m}/\m^{i+j+1}(\mathbf{F}\otimes_R\mathbf{F})_{n+m}$ , see \cite[Lemma 2.7]{I-T}. Now, define (the
product)
\[ \mu^{\lin^R(\mathbf{F})}: \lin^R(\mathbf{F})\otimes_S \lin^R(\mathbf{F})\rightarrow
\lin^R(\mathbf{F})\] as the composition $\lin^R(\mu^{\mathbf{F}})
\circ\lambda $. Since $\mu$ satisfies conditions $(i),(ii),(iii)$ of
the definition of DG algebras, one can see that
$\mu^{\lin^R(\mathbf{F})}$ satisfies the same  properties as
well.
 Therefore the linear part of $\mathbf{F}$ is a DG algebra over
$S$ augmented to $k$.
\end{proof}

Let $\m^{*}$ denote the homogeneous maximal ideal of $R^{\agr}$. The
 following is a direct consequence of the above lemma.
\begin{cor}\label{ann}
 If $\mathbf{F}$ is a minimal free resolution of $k$, then
$\m^{*}\Ho_n(\lin^R(\mathbf{F}))=0$ for all $n$.
\end{cor}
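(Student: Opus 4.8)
The plan is to reduce the statement to the DG algebra structure furnished by Lemma~\ref{dg} and then to pin down the degree-zero homology of the linear part. First I would observe that $\lin^R(\mathbf{F})$, and hence its homology, depends only on $k$ and not on the chosen minimal free resolution: any two minimal free resolutions of $k$ are isomorphic as complexes, and $\lin^R(-)$ carries an isomorphism of minimal complexes to an isomorphism. Consequently I may assume that $\mathbf{F}$ is a minimal Tate resolution of $k$, so that Lemma~\ref{dg} applies and $\lin^R(\mathbf{F})$ becomes a DG algebra over $S:=R^{\agr}$, augmented to $k$. By Remark~\ref{homology}, the total homology $\bigoplus_{n\geq 0}\Ho_n(\lin^R(\mathbf{F}))$ is then a graded module over the commutative ring $\Ho_0(\lin^R(\mathbf{F}))$.

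The key computation is that $\Ho_0(\lin^R(\mathbf{F}))=k$. Since $F_0=R$, we have $\lin^R_0(\mathbf{F})=F_0^{\agr}=S$, and the relevant differential $\lin^R_1(\mathbf{F})=F_1^{\agr}(-1)\to S$ is the linear part of $\partial_1$. Because $\partial_1$ sends a basis of $F_1$ onto a minimal generating set of $\m$, its linear part sends the degree-one generators of $F_1^{\agr}(-1)$ to a $k$-basis of $\m/\m^2=S_1$; the image is therefore the $S$-submodule generated by $S_1$, which is exactly $\m^{*}$. Hence $\Ho_0(\lin^R(\mathbf{F}))=S/\m^{*}=k$.

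It then remains to compare the two $S$-actions on $\Ho_n(\lin^R(\mathbf{F}))$: the one coming from the fact that $\lin^R(\mathbf{F})$ is a complex of graded free $S$-modules, and the one coming from the DG algebra product, under which $\Ho_n(\lin^R(\mathbf{F}))$ is a module over $\Ho_0(\lin^R(\mathbf{F}))$. This comparison is the only delicate step. The point is that for $s\in S=\lin^R_0(\mathbf{F})$ one has $\partial s=0$, so $s$ is a $0$-cycle, and the unital axiom together with the fact that the product descends to homology gives $s\cdot[z]=[sz]=[s]\,[z]$ for every cycle $z$; thus scalar multiplication by $s$ on $\Ho_n(\lin^R(\mathbf{F}))$ coincides with multiplication by the class $[s]\in\Ho_0(\lin^R(\mathbf{F}))$. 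Since the natural map $S\to\Ho_0(\lin^R(\mathbf{F}))=S/\m^{*}$ kills $\m^{*}$, every $s\in\m^{*}$ acts as zero on each $\Ho_n(\lin^R(\mathbf{F}))$, which is precisely the assertion $\m^{*}\Ho_n(\lin^R(\mathbf{F}))=0$.
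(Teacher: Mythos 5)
Your proof is correct and takes essentially the same route as the paper, which deduces the corollary from Lemma~\ref{dg} and Remark~\ref{homology} together with the identification $\Ho_0(\lin^R(\mathbf{F}))=R^{\agr}/\m^{*}=k$. The only difference is that you spell out three points the paper leaves implicit --- the reduction from an arbitrary minimal free resolution to a minimal Tate resolution, the computation of $\Ho_0$, and the agreement of the $S$-scalar action with multiplication by classes in $\Ho_0$ --- all of which are verified correctly.
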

\begin{proof}
The assertion follows from  Remark \ref{homology} with considering
the fact that $\Ho_0(\lin^R(\mathbf{F}))=R^{\agr}/\m^{*}$.
\end{proof}
In what will follow,  let $(\mathbf{F},\partial)$ be a minimal free
resolution of residue field $k$ with differential map $\partial$.
The differential map of $\lin^R(\mathbf{F})$  which is induced by
$\partial$ will be denoted by $\partial^*$.
 We recall that
$\lin^R(\mathbf{F})_n=F_n^{\agr}(-n)$. For  any  $ i,n\geq 0$ and $x
\in \m^iF_n$, $\partial^*$ maps $x+\m^{i+1}F_n$, the image of $x$ in
$\m^iF_n/\m^{i+1}F_n$, into the image of $\partial(x)$ in
$\m^{i+1}F_{n-1}/\m^{i+2}F_{n-1}$ that is $\partial(x)+
\m^{i+2}F_{n-1}$.
\begin{prop}\label{one}
Let $d$ be an integer. If
$\ld_R(k)\leq d$, then the following hold.
\begin{enumerate}
\item $\upsilon^{1}_d=0$.
\item
$\m^{*}\Ker \partial_{d}^{*}=\m^{*} \Image \partial_{d+1}^{*}$.

\end{enumerate}
\end{prop}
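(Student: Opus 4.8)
The plan is to establish (1) first and then deduce (2) from it. For (1), I start from Remark~\ref{first} applied to the minimal free resolution $\mathbf{F}$ of $k$: the equality $\upsilon^1_d=0$ is equivalent to the statement that $\partial_d(x)\in\m^2F_{d-1}$ forces $x\in\m F_d$. In the language of the linear part this says exactly that $\partial_d^{*}$ is injective on the bottom strand $F_d/\m F_d=\lin^R(\mathbf{F})_d$ in internal degree $d$. Since $\lin^R(\mathbf{F})_{d+1}=F_{d+1}^{\agr}(-(d+1))$ sits in internal degrees $\ge d+1$ and $\partial_{d+1}^{*}$ preserves internal degree, $\Image\partial_{d+1}^{*}$ is concentrated in internal degrees $\ge d+1$, so that $\Ho_d(\lin^R(\mathbf{F}))_d=(\Ker\partial_d^{*})_d$. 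Thus (1) is reduced to showing that the extremal (lowest internal degree) component $\Ho_d(\lin^R(\mathbf{F}))_d$ vanishes.

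To prove this extremal vanishing I would use the DG algebra structure on $\lin^R(\mathbf{F})$ from Lemma~\ref{dg} (taking $\mathbf{F}$ to be a minimal Tate resolution, which is harmless since $\Ho(\lin^R(\mathbf{F}))$ is independent of the chosen minimal resolution). The hypothesis $\ld_R(k)\le d$ gives, by definition, $\Ho_n(\lin^R(\mathbf{F}))=0$ for all $n>d$, so by Corollary~\ref{ann} the homology $A:=\Ho(\lin^R(\mathbf{F}))$ is a bigraded $k$-algebra supported in homological degrees $0\le n\le d$ and in internal degrees at least the homological degree; the class to be killed lies in the extremal bidegree $(d,d)$. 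The plan is to use the vanishing $\Ho_{d+1}(\lin^R(\mathbf{F}))=0$ together with graded-commutativity of $A$ to exclude a nonzero class in bidegree $(d,d)$, transporting the higher vanishing downward through the product on $A$; on the module side this corresponds, via Theorem~\ref{S}, to deducing $\upsilon^1_d=0$ from the maps $\upsilon^n_{d+1}=0$ by exploiting the naturality of the $\upsilon^n_{\bullet}$ with respect to the action of $\Tor^R_{*}(k,k)$. I expect this exclusion of an extremal homology class to be the main obstacle, and it is precisely here that the existence of the DG algebra structure of Lemma~\ref{dg} is indispensable.

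Granting (1), part (2) is then routine. The inclusion $\m^{*}\Image\partial_{d+1}^{*}\subseteq\m^{*}\Ker\partial_{d}^{*}$ is immediate from $\Image\partial_{d+1}^{*}\subseteq\Ker\partial_{d}^{*}$. For the reverse inclusion, recall that $\lin^R(\mathbf{F})$ is a \emph{minimal} complex of graded free $R^{\agr}$-modules, so $\partial_{d+1}^{*}$ is $R^{\agr}$-linear and the free module $\lin^R(\mathbf{F})_{d+1}=F_{d+1}^{\agr}(-(d+1))$ is generated in internal degree $d+1$. By (1), $\Ker\partial_{d}^{*}$ is concentrated in internal degrees $\ge d+1$, so for homogeneous $\zeta\in\Ker\partial_{d}^{*}$ and homogeneous $m^{*}\in\m^{*}$ the element $m^{*}\zeta$ has internal degree $\ge d+2$. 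By Corollary~\ref{ann} we have $m^{*}\zeta\in\Image\partial_{d+1}^{*}$, say $m^{*}\zeta=\partial_{d+1}^{*}(w)$ with $w$ homogeneous; since $\partial_{d+1}^{*}$ preserves internal degree, $w$ has internal degree $\ge d+2$, whence $w\in\m^{*}\lin^R(\mathbf{F})_{d+1}$. Writing $w=\sum_{l} n_{l}^{*}u_{l}$ with $n_{l}^{*}\in\m^{*}$ and using $R^{\agr}$-linearity gives $m^{*}\zeta=\sum_{l} n_{l}^{*}\partial_{d+1}^{*}(u_{l})\in\m^{*}\Image\partial_{d+1}^{*}$. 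Hence $\m^{*}\Ker\partial_{d}^{*}\subseteq\m^{*}\Image\partial_{d+1}^{*}$, which together with the trivial inclusion yields the desired equality.
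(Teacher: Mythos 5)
Your part (1) is not a proof. After the (correct) reduction of $\upsilon^{1}_d=0$ to the vanishing of the extremal component $\Ho_d(\lin^R(\mathbf{F}))_d$, you only announce a plan --- ``transport the higher vanishing downward through the product'' --- and you yourself flag it as the main obstacle. As sketched, that mechanism cannot work: the hypothesis $\ld_R(k)\leq d$ already gives $\Ho_n(\lin^R(\mathbf{F}))=0$ for all $n>d$, so the product of a class in bidegree $(d,d)$ with any class of positive homological degree lands in a group that is zero; graded-commutativity and the vanishing of $\Ho_{d+1}$ therefore impose no constraint at all on a class in $\Ho_d$, and there is nothing to transport downward. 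The appeal to ``naturality of the $\upsilon^{n}_{\bullet}$ with respect to the action of $\Tor^R_{*}(k,k)$'' is likewise never made precise. The paper's actual proof of (1) uses the DG structure only through Corollary \ref{ann} (namely $\m^{*}$ annihilates the homology of the linear part) and then proceeds by an entirely different mechanism: if $\upsilon^{1}_d\neq 0$, Remark \ref{first} produces a cycle $e^{*}$ that is part of a basis of the free module $F^{\agr}_d(-d)$; then $\m^{*}e^{*}\subseteq \mathrm{B}:=\Image\partial^{*}_{d+1}$, and since $\mathrm{B}\subseteq \m^{*}F^{\agr}_d(-d)$, the submodule $\m^{*}e^{*}\cong \m^{*}(-d)$ splits off as a direct summand of $\mathrm{B}$. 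Because $\ld_R(k)\leq d$ makes the tail of $\lin^R(\mathbf{F})$ a linear free resolution of $\mathrm{B}$, the summand $\m^{*}(-d)$ has a linear resolution, hence so does $k$ over $R^{\agr}$; thus $R^{\agr}$ is Koszul, $\lin^R(\mathbf{F})$ is acyclic, and $\upsilon^{1}_d=0$, a contradiction. This direct-summand/Koszulness argument is precisely the missing idea, and nothing in your sketch supplies a substitute for it.

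Your part (2), by contrast, is correct and essentially the paper's argument --- in one spot marginally cleaner: where the paper invokes the linear resolution of $\mathrm{B}$ to conclude that $\mathrm{B}$ is generated in internal degree $d+1$, you observe that this is automatic, since $\mathrm{B}$ is a quotient of the free module $F^{\agr}_{d+1}(-(d+1))$ generated in degree $d+1$, so every element of $\mathrm{B}$ of degree at least $d+2$ lies in $\m^{*}\mathrm{B}$; the remaining steps (the degree bound on $\Ker\partial^{*}_d$ via part (1) and Remark \ref{first}, and the inclusion $\m^{*}\Ker\partial^{*}_d\subseteq\mathrm{B}$ via Corollary \ref{ann}) coincide with the paper's. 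But since your (2) takes (1) as input, the proposal as a whole remains incomplete until the extremal vanishing in (1) is actually established.
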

\begin{proof}
 For the simplicity, we set $\mathrm{Z}=\Ker \partial_{d}^{*}$ and
$\mathrm{B}= \Image \partial_{d+1}^{*}$.\\
\indent $(1)$ If $\upsilon^{1}_d\neq 0$, then there exists an
element $e\in F_{d}\setminus \m F_{d}$ such that $\partial_{d}(e)\in
\m^2F_{d-1}$, by \ref{first} . Let $e^*$ be the image of $e$ in the quotient module $F_d/\m F_d$. Then
$\partial_{d}^{*}(e^*)=0$ and so $e^*$ is a cycle in $\lin^R(\mathbf{F})$. Applying
\ref{ann}, we have $\m^{*}\mathrm{Z}\subseteq \mathrm{B}$ and
therefore $\m^{*}e^*\subseteq \mathrm{B}$. As $e^*$ is an
element of a basis of the free module $F^{\agr}_d(-d)$  and
$\mathrm{B}\subseteq \m^{*}F^{\agr}_{d}(-d)$, it is straightforward
to see that $\m^{*}e^*$ is a direct summand of $\mathrm{B}$. By
the hypothesis, $\mathrm{B}$ has a linear resolution. This implies
that the same property holds for $\m^{*}e^*$. Therefore $k$ has
a linear resolution over $R^{\agr}$ and consequently
$\lin^R(\mathbf{F})$ is acyclic. Hence $\upsilon^{1}_d= 0$ and this
is a
contradiction. \\
\indent For $(2)$, it is enough to show that
$\m^{*}\mathrm{Z}\subseteq \m^{*}\mathrm{B}$. First we claim that
$\m^{*}\mathrm{Z}$ is generated in degree at least $d+2$ and
$\m^{*}\mathrm{Z}\subseteq \mathrm{B}$. Indeed since
$\upsilon^{1}_d= 0$, applying Remark \ref{first}, one has
$\mathrm{Z}\subseteq \m^{*}F^{\agr}_d(-d)$ and consequently
$\m^{*}\mathrm{Z}$ is generated in degree at least $d+2$. The second
part of the claim follows from Corollary \ref{ann}.\\
\indent On the other hand, $\mathrm{B}$ has a linear resolution, by
the hypothesis. Hence $\mathrm{B}$ is generated by elements of
degree $d+1$ and then all its elements of degree at least $d+2$
contained in $\m^{*}\mathrm{B}$. Now, putting these two considerations
together, we get $\m^{*}\mathrm{Z}\subseteq \m^{*}\mathrm{B}$.
\end{proof}

Now, we are ready to prove our main result.
\begin{thm}
Assume that $R$ is Artinian with $\m^4=0$. If $\ld_R(k)<\infty$,
then $\ld_R(k)=0.$
\end{thm}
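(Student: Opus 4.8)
The plan is to prove a descent statement: for every $d\ge 1$, if $\ld_R(k)\le d$ then in fact $\ld_R(k)\le d-1$. Since $\ld_R(k)<\infty$ by hypothesis, iterating this implication drives the linearity defect down to $0$. By Theorem \ref{S} (applied with $d-1$ in place of $d$), the conclusion $\ld_R(k)\le d-1$ is equivalent to $\upsilon^{n}_i=0$ for all $i\ge d$ and all $n\ge 0$. The cases $i\ge d+1$ already hold because $\ld_R(k)\le d$, so only the level $i=d$ must be treated. Here the hypothesis $\m^4=0$ does most of the bookkeeping: for $n\ge 3$ one has $\m^{n+1}=0$, hence $R/\m^{n+1}=R$ and $\Tor^R_d(k,R/\m^{n+1})=\Tor^R_d(k,R)=0$ for $d\ge 1$, so $\upsilon^{n}_d=0$ automatically; and $\upsilon^{1}_d=0$ by Proposition \ref{one}(1). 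Thus everything comes down to the single vanishing statement $\upsilon^{2}_d=0$.

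To analyze $\upsilon^{2}_d$, I would first unwind its definition through the minimal resolution $\mathbf{F}$: exactly as in Remark \ref{first}, one checks that $\upsilon^{2}_d=0$ if and only if every $x\in F_d$ with $\partial_d(x)\in\m^3F_{d-1}$ lies in $\partial_{d+1}(F_{d+1})+\m^2F_d$. Suppose, for contradiction, that $\upsilon^{2}_d\ne 0$, witnessed by such an $x$ with $x\notin\partial_{d+1}(F_{d+1})+\m^2F_d$. Since $\partial_d(x)\in\m^3F_{d-1}\subseteq\m^2F_{d-1}$ and $\upsilon^{1}_d=0$, Remark \ref{first} forces $x\in\m F_d$. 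Passing to the linear part, the image $\zeta$ of $x$ in $\m F_d/\m^2F_d=(F_d^{\agr})_1$ is then a nonzero element of internal degree $d+1$ with $\partial^{*}_d(\zeta)=0$, and the condition $x\notin\partial_{d+1}(F_{d+1})+\m^2F_d$ says precisely that $\zeta\notin\Image\partial^{*}_{d+1}$. In other words, $\zeta$ defines a nonzero class in $\Ho_d(\lin^R(\mathbf{F}))$ sitting in the lowest possible internal degree $d+1$.

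It remains to rule out such a class, and this is where the DG structure and $\m^4=0$ must be used together; I expect this to be the main obstacle. Writing $\mathrm{Z}=\Ker\partial^{*}_d$ and $\mathrm{B}=\Image\partial^{*}_{d+1}$, Proposition \ref{one}(2) gives $\m^{*}\mathrm{Z}=\m^{*}\mathrm{B}$, while the hypothesis $\ld_R(k)\le d$ makes $\mathrm{B}$ a module with linear $R^{\agr}$-resolution, so $\mathrm{B}$ is generated in degree $d+1$; and $\upsilon^{1}_d=0$ places $\mathrm{Z}$ in internal degrees $\ge d+1$. Comparing the degree-$(d+2)$ components of $\m^{*}\mathrm{Z}=\m^{*}\mathrm{B}$ yields $(\m^{*})_1\,\mathrm{Z}_{d+1}=(\m^{*})_1\,\mathrm{B}_{d+1}$, so every linear multiple of $\zeta$ already lies in $(\m^{*})_1\mathrm{B}_{d+1}$.

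The crux is to upgrade this ``after one multiplication'' statement to the equality $\mathrm{Z}_{d+1}=\mathrm{B}_{d+1}$, that is, to conclude $\zeta\in\mathrm{B}$; this cannot follow from linear algebra alone, since the multiplication $(\m^{*})_1\otimes(\m^{*})_1\to(\m^{*})_2$ generally has a kernel. I would exploit the full DG algebra structure of $\lin^R(\mathbf{F})$ provided by Lemma \ref{dg}, together with the linear resolution of $\mathrm{B}$ and the vanishing $(\m^{*})^4=0$, to control the degree-$(d+2)$ syzygies of $\mathrm{B}$ and thereby show that a defect-one cycle in homological degree $d$ is forced to be a boundary. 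Producing this contradiction completes the descent, and hence the theorem; establishing it is the heart of the matter, since it is the only point at which the full strength of $\m^4=0$ (rather than merely $\m^3=0$) is needed.
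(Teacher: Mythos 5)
Your reduction is correct and follows the same skeleton as the paper's proof: descending induction on $d$, the observation that $\m^4=0$ kills $\upsilon^{n}_d$ for $n\geq 3$ (and trivially for $n=0$), the vanishing $\upsilon^{1}_d=0$ from Proposition \ref{one}(1), and the reformulation, parallel to Remark \ref{first}, of $\upsilon^{2}_d=0$ as the statement that any $x\in F_d$ with $\partial_d(x)\in\m^3F_{d-1}$ lies in $\partial_{d+1}(F_{d+1})+\m^2F_d$ --- equivalently, given $\upsilon^1_d=0$, that $\Ho_d(\lin^R(\mathbf{F}))$ vanishes in internal degree $d+1$, i.e.\ $\mathrm{Z}_{d+1}=\mathrm{B}_{d+1}$. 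Your degree bookkeeping up to that point is also sound: since $\upsilon^1_d=0$ places $\mathrm{Z}$ in degrees $\geq d+1$ and $\mathrm{B}$ lives in degrees $\geq d+1$ as well, Proposition \ref{one}(2) does yield $(\m^{*})_1\mathrm{Z}_{d+1}=(\m^{*})_1\mathrm{B}_{d+1}$.

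However, the proposal has a genuine gap, and you flag it yourself: the single vanishing $\upsilon^{2}_d=0$, to which you correctly reduce the whole theorem, is never proved. You write that you ``would exploit the full DG algebra structure \dots{} to control the degree-$(d+2)$ syzygies of $\mathrm{B}$'' and that establishing this is ``the heart of the matter''; that is a plan, not an argument, and as you yourself note the passage from $(\m^{*})_1\mathrm{Z}_{d+1}=(\m^{*})_1\mathrm{B}_{d+1}$ to $\mathrm{Z}_{d+1}=\mathrm{B}_{d+1}$ cannot be made by linear algebra alone: a cycle $\zeta$ of degree $d+1$ could have every linear multiple in $(\m^{*})_1\mathrm{B}_{d+1}$ without itself being a boundary, since Corollary \ref{ann} already guarantees $\m^{*}\zeta\subseteq\mathrm{B}$ for free. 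What you may not have anticipated is that the paper does not prove this step internally either: having secured $\upsilon^{1}_d=0$ via Proposition \ref{one}, it invokes \c{S}ega's Theorem 7.1 from \cite{S}, which under $\m^4=0$ supplies exactly the implication $\upsilon^{2}_d=0$; the DG algebra structure of the linear part (Lemma \ref{dg}, Corollary \ref{ann}) is used only to prove Proposition \ref{one}, not to attack $\upsilon^{2}_d$ directly. So your write-up amounts to a correct reduction of the theorem to precisely the statement the paper imports from \cite{S}, plus an unexecuted sketch for reproving that statement; as it stands it does not constitute a complete proof.
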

\begin{proof}
Let $d$ be a non-negative  integer and $\ld_R(k)\leq d$. We prove by
descending induction on $d$. The case where $d=0$ is clear. Let
$d>0$. Applying  Proposition \ref{one}, we have $\upsilon^{1}_d=0$.
Since $\m^4=0$, it follows from \cite[Theorem 7.1]{S} that
$\upsilon^{2}_d=0$. Again since $\m^4=0$ obviously
$\upsilon^{i}_d=0$ for all $i\geq 3$, by the definition  of the map
$\upsilon^{i}_d$. Therefore,  from \ref{S} we get $\ld_R(k)\leq
d-1$. This completes the induction and finishes the proof.
\end{proof}
\vskip 1 cm
%%%%%%%%%%%%%%%%%%%%%%%%
\textbf{Acknowledgments.}
 The author would like to express great thanks to the referee for valuable comments
and suggestions which have improved the exposition of this paper. This research
was in part supported by a grant from IPM (No. 94130028). This work was also
jointly supported by the Iran National Science Foundation (INSF) and Alzahra
University grant No. 95001343.

\end{document}